\newcommand{\keywords}[1]{\par\addvspace\baselineskip
\noindent\keywordname\enspace\ignorespaces#1}
\def\ep{\epsilon}
\def\p{\prime}
\def\({\left(}
\def\){\right)}
\def\[({\left[}
\def\]{\right]}
\def\d{{\rm d}}
\def\diam{\rm diam}
\def\dis{\displaystyle}
\begin{document}
\mainmatter  

\title{On a lower bound for the eccentric connectivity index of graphs}

\author{Devsi Bantva}
\institute{Lukhdhirji Engineering College, Morvi - 363 642 \\
Gujarat (INDIA) \\
\mailsa\\}

\toctitle{On a lower bound for the eccentric connectivity index of graphs}
\tocauthor{Devsi Bantva}
\maketitle

\begin{abstract}
The eccentric connectivity index of a graph $G$, denoted by $\xi^{c}(G)$, defined as $\xi^{c}(G)$ = $\sum_{v \in V(G)}\epsilon(v) \cdot \d(v)$, where $\epsilon(v)$ and $\d(v)$ denotes the eccentricity and degree of a vertex $v$ in a graph $G$, respectively. The volcano graph $V_{n,d}$ is a graph obtained from a path $P_{d+1}$ and a set $S$ of $n-d-1$ vertices, by joining each vertex in $S$ to a central vertex/vertices of $P_{d+1}$. In \cite{Morgan2}, Morgan \emph{et al.} proved that $\xi^{c}(G) \geq \xi^{c}(V_{n,d})$ for any graph of order $n$ and diameter $d \geq 3$. In this paper, we present a short and simple proof of this result by considering the adjacency of vertices in graphs.

\keywords{Eccentricity (in graph), Eccentric connectivity index, Volcano graph.}
\end{abstract}

\section{Introduction}
Let $G$ be a finite, connected and undirected graph without loops and multiple edges. We denote the \emph{vertex set} of $G$ by $V(G)$. The \emph{distance} between two vertices $u$ and $v$ of $G$, denoted by $d(u,v)$, is the least length of a $u,v-$path in $G$. The \emph{eccentricity} of a vertex $v$ in a graph $G$, denoted by $\epsilon(v)$, is the distance of a vertex farthest from $v$ in $G$. The \emph{degree} of a vertex $v$ in a graph $G$, denoted by $\d(v)$, is the number of edges incident to it.

A topological index is a numerical graph invariants used for Quantitative Structure-Activity Relationship (QSAR) and Quantitative Structure-Property Relationship (QSPR) studies. The Wiener index, introduced in 1947 by Herold Wiener\cite{Wiener}, is the first non-trivial topological index in Chemistry. Then after many topological indices have been defined such as Zagreb index, PI-index etc. and successfully used to study the chemical, pharmaceutical and other properties of molecules. More recently, a new adjacent-cum-distance based topological index, the \emph{eccentric connectivity index} (or ECI for short) denoted by $\xi^{c}(G)$ has been introduced by Sharma, Goswami and Madan\cite{Sharma} which is defined as
\begin{equation}\label{def:eci}
\xi^{c}(G) = \displaystyle\sum_{v \in V(G)} \epsilon(v) \cdot \d(v)
\end{equation}
The eccentric connectivity index has been employed successfully for the development of numerous mathematical models for the prediction of biological activities of diverse nature. Recently, many results on the eccentric connectivity index have been obtained and some of them have been applied as means for modeling chemical, pharmaceutical and other properties of molecules, for details see \cite{Gupta,Ilic,Sardana1,Sardana2,Sharma,Zhou}.

The common trend of research for the topological indices and its variants is to determine the extremal graphs for the given topological index or its variant. Also the trend is to determine the extremal trees for the given topological index or its variant. For most of introduced topological indices or its variants, the extremal graphs or trees are determined except Wiener index; the origin of all topological indices and its variants. The same approach is considered by Morgan \emph{et al.} for the eccentric connectivity index of graphs in \cite{Morgan1} and \cite{Morgan2}.

In \cite{Morgan1}, Morgan \emph{et al.} noted the eccentric connectivity index of some basic graph families and determined the eccentric connectivity index of other three classes of graphs namely broom graph $B_{n,d}$ (a graph which consists a path $P_{d}$, together with ($n-d$) end vertices all adjacent to the same end vertex of $P_{d}$), lollipop graph $L_{n,d}$ (a graph obtained from a complete graph $K_{n-d}$ and a path $P_{d}$, by joining one of the end vertices of $P_{d}$ to all the vertices of $K_{n-d}$) and volcano graph $V_{n,d}$ (a graph obtained from a path $P_{d+1}$ and a set $S$ of $n-d-1$ vertices, by joining each vertex in $S$ to a central vertex/vertices of $P_{d+1}$). Note that for a fixed value of $n$, when $d$ is even, the volcano graph $V_{n,d}$ is unique; whereas when $d$ is odd, there may be several non-isomorphic volcano graphs $V_{n,d}$. The readers are advised to see \cite{Morgan1} and \cite{Morgan2} for figures and details on these graph families. The eccentric connectivity index of path $P_{d+1}$ and volcano graph $V_{n,d}$ is given as follows.
\begin{proposition} Let $n \geq 2$ be an integer. Then
\begin{eqnarray}\label{eci:pn}
\xi^{c}(P_{n}) = & \left\{
\begin{array}{ll}
\frac{1}{2}(3n^{2}-6n+4), & \mbox{ for $n$ even}, \\ [0.3cm]
\frac{3}{2}(n-1)^{2}, & \mbox{ for $n$ odd}.
\end{array}
\right.
\end{eqnarray}
\end{proposition}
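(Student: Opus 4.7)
The plan is a direct computation exploiting the simple structure of a path. Label the vertices of $P_n$ as $v_1,v_2,\ldots,v_n$ along the path, so that $\d(v_1)=\d(v_n)=1$ and $\d(v_i)=2$ for $2\le i\le n-1$. A vertex at position $i$ in the path has eccentricity $\epsilon(v_i)=\max\{i-1,\,n-i\}$, since its farthest vertex is whichever endpoint lies further away. Substituting these into the definition \eqref{def:eci} gives
\begin{equation*}
\xi^{c}(P_{n}) \;=\; 2(n-1) \;+\; 2\sum_{i=2}^{n-1}\max\{i-1,\,n-i\},
\end{equation*}
so everything reduces to evaluating a single sum of a piecewise-linear function.

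Next I would split on the parity of $n$ and use the obvious symmetry $\epsilon(v_i)=\epsilon(v_{n+1-i})$ to fold the sum. For $n=2k$ even, the eccentricity values on the left half are $2k-1,2k-2,\ldots,k$, each occurring twice (once on each side); for $n=2k+1$ odd, the same happens except for the single central vertex of eccentricity $k$. In each case the sum collapses to an arithmetic progression $\sum_{j=m}^{M}j=(M+m)(M-m+1)/2$.

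Finally I would plug the closed forms back, convert from $k$ to $n$ (namely $k=n/2$ in the even case and $k=(n-1)/2$ in the odd case), and simplify. This yields $\xi^{c}(P_n)=\tfrac12(3n^{2}-6n+4)$ when $n$ is even and $\xi^{c}(P_n)=\tfrac32(n-1)^{2}$ when $n$ is odd, matching \eqref{eci:pn}. There is no real obstacle here; the only thing to be careful about is the bookkeeping at the middle vertex (or vertices) when folding the sum in half, to avoid either double counting the center when $n$ is odd or missing the two equal central terms when $n$ is even.
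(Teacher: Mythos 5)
Your computation is correct: the degree sequence, the formula $\epsilon(v_i)=\max\{i-1,n-i\}$, and the folding of the sum into the arithmetic progression from $\lceil (n-1)/2\rceil$ (or $k$) up to $n-1$ all check out, and the final simplifications do yield $\tfrac12(3n^2-6n+4)$ and $\tfrac32(n-1)^2$ in the two parity cases. Note that the paper itself offers no proof of this proposition --- it is quoted from Morgan \emph{et al.} \cite{Morgan1} --- so there is nothing to compare against; your direct evaluation is the standard argument and is complete modulo routine arithmetic.
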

\begin{proposition} Let $n,d$ be non-negative integers. Then
\begin{eqnarray}\label{eci:vnd}
\xi^{c}(V_{n,d}) = & \left\{
\begin{array}{ll}
nd+n+\frac{d^{2}}{2}-2d-1, & \mbox{ for $d$ even}, \\ [0.3cm]
nd+2n+\frac{d^{2}}{2}-3d-\frac{3}{2}, & \mbox{ for $d$ odd}.
\end{array}
\right.
\end{eqnarray}
\end{proposition}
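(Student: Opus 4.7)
The plan is a direct vertex-by-vertex evaluation of the defining sum in (\ref{def:eci}). Label the vertices of the path $P_{d+1}$ inside $V_{n,d}$ as $v_0 v_1 \cdots v_d$ and write $S = \{s_1, \ldots, s_{n-d-1}\}$ for the pendant set. The first step is to record the eccentricity and degree of every vertex: an easy check gives $\epsilon(v_i)=\max(i,d-i)$ for each $0\le i\le d$, and $\epsilon(s)=\lceil d/2\rceil+1$ for every $s\in S$, since the farthest vertex from $s$ is the path endpoint opposite the center. The two path endpoints have degree $1$, each $s_j\in S$ has degree $1$, every non-central interior path vertex has degree $2$, and the remaining $n-d-1$ edges attach to the central vertex (when $d$ is even) or to the two central vertices (when $d$ is odd) of $P_{d+1}$.

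I would then split the sum in (\ref{def:eci}) into a path contribution $\sum_{i=0}^{d}\max(i,d-i)\,\d(v_i)$ and an $S$ contribution $(n-d-1)\bigl(\lceil d/2\rceil+1\bigr)$. Inside the path contribution, the parity of $d$ enters through the closed form of $\sum_{i=1}^{d-1}\max(i,d-i)$, which is the main piece of bookkeeping. When $d$ is even, the unique center $v_{d/2}$ carries all pendant edges, contributing an extra $(d/2)(n-d-1)$; combining this with the path sum and the $S$ contribution and simplifying yields the first line of (\ref{eci:vnd}).

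The case $d$ odd has the subtlety that $V_{n,d}$ is not unique. If $a$ of the $S$-vertices attach to $v_{(d-1)/2}$ and $b=n-d-1-a$ attach to $v_{(d+1)/2}$, the apparent ambiguity disappears at once because both central vertices have eccentricity $(d+1)/2$, so their joint contribution is $\frac{d+1}{2}\bigl((2+a)+(2+b)\bigr)=\frac{d+1}{2}(n-d+3)$, independent of the splitting $a+b=n-d-1$. This is the key observation that makes $\xi^{c}(V_{n,d})$ well-defined and yields a single formula in the odd case, after which only arithmetic simplification remains to reach the second line of (\ref{eci:vnd}).

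The only genuine obstacle is the bookkeeping in the path sum; once $\sum_{i=1}^{d-1}\max(i,d-i)$ is evaluated separately for the two parities of $d$, the two displayed formulas fall out cleanly.
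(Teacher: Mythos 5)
Your computation is correct, but note that the paper itself gives no proof of this proposition at all: it is quoted from Morgan \emph{et al.}~\cite{Morgan1}, so there is no internal argument to compare against. Your direct evaluation checks out: for $d$ even the sum is $\xi^{c}(P_{d+1})+(n-d-1)\frac{d}{2}+(n-d-1)(\frac{d}{2}+1)=\frac{3}{2}d^{2}+(n-d-1)(d+1)$, and for $d$ odd it is $\frac{3}{2}d^{2}+\frac{1}{2}+(n-d-1)(d+2)$, both of which simplify to the displayed formulas; your observation that the odd case is well-defined because both central vertices share eccentricity $\frac{d+1}{2}$ (so only the total $a+b=n-d-1$ matters) is exactly the point that needs making, and it is also implicitly what the paper relies on when it speaks of several non-isomorphic $V_{n,d}$ with one value of $\xi^{c}$. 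Two small remarks. First, you can skip the bookkeeping over $\sum_{i=1}^{d-1}\max(i,d-i)$ entirely: since the $S$-vertices do not change the eccentricities of the path vertices, the path contribution (with path degrees) is literally $\xi^{c}(P_{d+1})$, which Proposition~1 already supplies as $\frac{3}{2}d^{2}$ ($d$ even) and $\frac{3}{2}d^{2}+\frac{1}{2}$ ($d$ odd); these are precisely the constants that reappear in the proof of Theorem~\ref{main1}. Second, your claims $\epsilon(v_i)=\max(i,d-i)$ and $\epsilon(s)=\lceil d/2\rceil+1$ require the pendant vertices not to increase any path vertex's eccentricity, which fails for $d=1$ (and is vacuous or degenerate for $d=0$); so the hypothesis ``non-negative integers'' in the statement is too generous, and your argument really proves the formula for $d\ge 2$ even and $d\ge 3$ odd, which is all the paper ever uses.
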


In \cite{Morgan1}, Morgan \emph{et al.} gave a lower bound for the eccentric connectivity index of trees and proved that $\xi^{c}(T) \geq \xi^{c}(V_{n,d})$ for any tree $T$ of order $n \geq 3$ and diameter $d$. Later, in \cite{Morgan2}, Morgan \emph{et al.} extended this lower bound for the eccentric connectivity index to arbitrary connected graph of order $n$ and diameter $d \geq 3$ but we emphasize that the proof is lengthy and complicated. In this paper, we give a short proof of this result by considering the connectivity of vertices in graph which is a very simple approach. Moreover, this approach can also be extend to prove similar types of results for other topological indices.

\section{Preliminaries}

We follow \cite{West} for graph theoretic definition and notation. A tree $T$ is a connected graph that contains no cycle. A caterpillar is a tree in which all the vertices are within distance one from central path. The diameter of a graph $G$, denoted by $\diam(\emph{G})$ or simply $d$, is max\{$d(u,v) : u, v \in V(G)$\}. The \emph{degree} of a vertex in a graph $G$, denoted by $\d(v)$, is defined as the number of edges incident to it. Let $H \subseteq V(G)$ then for any $v \in V(G)$, define $d(v|H)$ is the degree of a vertex $v$ in a subgraph induced by $H$ of $G$. Note that if $H$ = $V(G)$ then $d(v|H)$ = $d(v)$; otherwise $d(v|H) \leq d(v)$. The center of a graph $G$, denoted by $C(G)$, is the set of vertices with minimum eccentricity. Note that for any $v \in V(G), \epsilon(v) \geq \lceil d/2 \rceil$ and $\epsilon(v)$ = $\lceil d/2 \rceil$ if and only if $v \in V(C(G))$. We notice the following well known results about center of graphs.
\begin{proposition} The center $C(G)$ of a graph $G$ is contained in a block of $G$.
\end{proposition}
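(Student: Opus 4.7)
The plan is to argue by contradiction using the standard block-tree structural fact: two vertices of $G$ lie in a common block if and only if no single cut vertex of $G$ separates them. If $G$ has no cut vertex, then $G$ is itself a block and $C(G) \subseteq V(G)$ is trivially contained in a block, so the interesting case is when $G$ has at least one cut vertex.

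Assuming for contradiction that $C(G)$ is not contained in any single block, I would first choose $c_1, c_2 \in C(G)$ that lie in distinct blocks, and invoke the characterization above to obtain a cut vertex $w$ whose removal places $c_1$ and $c_2$ in different components of $G - w$. Next, I would fix a vertex $u$ achieving $d(u, w) = \epsilon(w)$; since the diameter is at least $1$ we have $\epsilon(w) \geq 1$, so $u \neq w$ and $u$ lies in exactly one component of $G - w$. At least one of $c_1, c_2$, call it $c$, then lies in a component of $G - w$ different from the one containing $u$, so every $c$-$u$ path in $G$ must traverse $w$, yielding $d(c, u) = d(c, w) + d(w, u) \geq 1 + \epsilon(w) > \epsilon(w)$. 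This gives $\epsilon(c) \geq d(c, u) > \epsilon(w)$, contradicting the fact that $c \in C(G)$ forces $\epsilon(c) \leq \epsilon(w)$.

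The main conceptual ingredient is the block-tree structural fact rather than any distance computation; once it is available, the rest is a one-line application of the cut-vertex decomposition of distances. The only minor points requiring care are the degenerate case where $G$ has no cut vertex (handled up front) and the verification that $u \neq w$, which follows from $\epsilon(w) \geq 1$ whenever the diameter of $G$ is at least $1$. This strategy is entirely structural and makes no appeal to the specific graph families (paths, volcano graphs) discussed earlier in the paper.
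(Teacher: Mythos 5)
Your argument is correct and is the classical proof of this fact (essentially Harary's): the paper itself states the proposition without proof, citing it as well known, so there is no in-paper argument to compare against. One small wording point: since a cut vertex belongs to several blocks, ``$c_1, c_2$ lie in distinct blocks'' should be phrased as ``no single block contains both $c_1$ and $c_2$''; with that reading, your appeal to the separation characterization and the ensuing inequality $\epsilon(c) \geq d(c,w) + d(w,u) > \epsilon(w)$ is exactly right.
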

\begin{proposition} The center $C(T)$ of a tree $T$ consists of a single vertex or two adjacent vertices.
\end{proposition}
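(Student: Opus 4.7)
The plan is to use induction on $|V(T)|$ via the classical leaf-pruning argument. The base cases are $|V(T)|=1$ (where $C(T)$ is the unique vertex) and $|V(T)|=2$ (where $C(T)$ consists of the two adjacent vertices, as claimed). For the inductive step with $|V(T)| \geq 3$, let $L$ denote the set of leaves of $T$ and put $T' := T - L$. Removing leaves preserves acyclicity and connectedness, and a tree with at least three vertices always contains a non-leaf, so $T'$ is a non-empty tree of strictly smaller order. The strategy is to establish $C(T) = C(T')$ and then invoke the induction hypothesis on $T'$.

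The heart of the argument is the eccentricity-drop identity: $\epsilon_{T'}(v) = \epsilon_T(v) - 1$ for every $v \in V(T')$. For $\epsilon_{T'}(v) \geq \epsilon_T(v) - 1$, I would use the tree property that any vertex $u$ realising $\epsilon_T(v)$ must be a leaf (otherwise $u$ has a neighbour off the unique $v$-$u$ path, yielding a strictly longer distance from $v$), so the predecessor of such a leaf on the path to $v$ lies in $T'$ at distance $\epsilon_T(v)-1$ from $v$. For the reverse inequality, the same observation gives $d_T(v,u) \leq \epsilon_T(v)-1$ for every non-leaf $u$, and distances inside $T'$ coincide with distances in $T$ since a degree-one vertex cannot occur as an interior vertex of any path. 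Consequently, the minimisers of $\epsilon_{T'}$ on $V(T')$ are exactly the minimisers of $\epsilon_T$ restricted to $V(T')$.

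To complete the equality $C(T) = C(T')$ it remains to rule leaves out of $C(T)$: for any leaf $\ell$ with neighbour $w$, every path from $\ell$ to another vertex of $T$ passes through $w$, giving $\epsilon_T(\ell) = \epsilon_T(w)+1 > \epsilon_T(w)$, so $\ell \notin C(T)$. The induction hypothesis applied to $T'$ then yields the desired dichotomy for $T$. The step I expect to require the most care is the eccentricity-drop identity, and in particular the observation that a farthest vertex from any non-leaf is necessarily a leaf; everything else is routine, with the recursion terminating cleanly at a single vertex or a single edge, matching the two possibilities in the statement.
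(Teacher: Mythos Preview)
The paper does not supply a proof of this proposition; it is listed among ``well known results about center of graphs'' and quoted without argument. Your leaf-pruning induction is correct and is exactly the classical proof (due to Jordan): the uniform eccentricity drop $\epsilon_{T'}(v)=\epsilon_T(v)-1$ on $V(T')$, together with the exclusion of leaves from $C(T)$ when $|V(T)|\ge 3$, yields $C(T)=C(T')$ and the recursion bottoms out at $K_1$ or $K_2$. The one place that genuinely uses the hypothesis $|V(T)|\ge 3$ is the claim $\epsilon_T(\ell)=\epsilon_T(w)+1$ for a leaf $\ell$ with neighbour $w$ (one needs a vertex other than $\ell$ to realise $\epsilon_T(w)$), and you have that hypothesis in force, so the argument is complete.
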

The following lemma characterize the vertices with minimum eccentricity in a graph $G$ which is useful for our main result.
\begin{lemma}\label{lem1} Let $G$ be any graph and $P_{d+1} = v_{0}-v_{1}-...-v_{d}$ be a fixed diametral path joining $v_{0}$ and $v_{d}$.
\begin{enumerate}
  \item[(a)] If $\ep(v)$ = $d/2$ for $v \in V(G \setminus P_{d+1})$ then $v$ is on other diametral path joining $v_{0}$ and $v_{d}$, where $C(P_{d+1})$ = \{$w$\}.
  \item[(b)] If $\ep(v)$ = $(d+1)/2$ for $v \in V(G \setminus P_{d+1})$ then either $v$ is on other diametral path joining $v_{0}$ and $v_{d}$ or $v$ is adjacent to both $w$ and $w^{'}$, where $C(P_{d+1})$ = \{$w,w^{'}$\}.
\end{enumerate}
\end{lemma}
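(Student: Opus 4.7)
My plan for Lemma 1 rests on one observation: since $v_0$ and $v_d$ are at distance exactly $d$, the triangle inequality $d(v,v_0)+d(v,v_d)\geq d$, combined with the upper bound $d(v,x)\leq \epsilon(v)$ for $x\in\{v_0,v_d\}$, will pin down $d(v,v_0)$ and $d(v,v_d)$ very tightly; after this, a concatenation of shortest paths through $v$ produces the sought diametral path.

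For part (a), $d$ is even and $\epsilon(v)=d/2$, so $d(v,v_0),d(v,v_d)\leq d/2$ and the triangle inequality forces both to equal $d/2$ exactly. I then concatenate a shortest $v_0$-to-$v$ path with a shortest $v$-to-$v_d$ path to obtain a $v_0$-$v_d$ walk of length $d=d(v_0,v_d)$. Any walk realising its endpoint distance must be a simple path (otherwise a repeated vertex would shorten it below $d$), so this walk is a diametral $v_0$-$v_d$ path, and because $v\notin V(P_{d+1})$ it is distinct from the fixed one.

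For part (b), the same bounds now give $d(v,v_0)+d(v,v_d)\in\{d,d+1\}$. When the sum is $d$, the argument from (a) works verbatim and places $v$ on another $v_0$-$v_d$ diametral path. The remaining case forces $d(v,v_0)=d(v,v_d)=(d+1)/2$, and here I aim for the second alternative: that $v$ is adjacent to both $w=v_{(d-1)/2}$ and $w'=v_{(d+1)/2}$. I would pick a neighbour $u$ of $v$ on a shortest $v$-to-$v_0$ path, so $d(u,v_0)=(d-1)/2$, and then argue from the eccentricity bound $d(v,v_i)\leq (d+1)/2$ applied at each interior $v_i$ of $P_{d+1}$ that $u$ must coincide with $w$; a symmetric argument on a shortest $v$-to-$v_d$ path identifies the corresponding neighbour as $w'$.

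The step I expect to be the main obstacle is precisely this last identification. The easy triangle-inequality bounds give only $d(v,w)\geq 1$ and $d(v,w')\geq 1$, whereas the upper bound coming from the eccentricity is $(d+1)/2$, far from $1$; pinning the two neighbours of $v$ down to the \emph{specific} central vertices of $P_{d+1}$ requires exploiting $\epsilon(v)=(d+1)/2$ at interior vertices of the diametral path and the local structure around the centre, rather than just the endpoint inequalities that settle (a) and the easy case of (b).
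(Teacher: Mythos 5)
Your handling of part (a), and of the subcase $d(v,v_{0})+d(v,v_{d})=d$ in part (b), is correct and is essentially the paper's argument: the paper phrases (a) contrapositively (if $v$ lies on no diametral $v_{0}$--$v_{d}$ path, then one of $d(v,v_{0})$, $d(v,v_{d})$ exceeds $d/2$, contradicting $\epsilon(v)=d/2$), but it is the same triangle-inequality observation followed by concatenation of shortest paths. The genuine gap is exactly where you flag it: the subcase of (b) with $d(v,v_{0})=d(v,v_{d})=(d+1)/2$. Your plan --- take a neighbour $u$ of $v$ on a shortest path from $v$ to $v_{0}$ and force $u=w$ --- cannot be completed, because nothing constrains $u$ to lie on $P_{d+1}$ at all; the equality $d(u,v_{0})=(d-1)/2$ is satisfied by many vertices off the path. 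Worse, the conclusion you are trying to reach is simply false for arbitrary graphs. Take $d=3$ with $P_{4}=v_{0}-v_{1}-v_{2}-v_{3}$ and adjoin two vertices $u,v$ with edges $v_{0}u$, $uv$, $vv_{2}$. The diameter is still $3$ and $P_{4}$ is still diametral; one computes $\epsilon(v)=2=(d+1)/2$, yet $d(v,v_{0})+d(v,v_{3})=4>3$ so $v$ lies on no diametral $v_{0}$--$v_{3}$ path, and $v$ is adjacent to $w'=v_{2}$ but not to $w=v_{1}$. So no amount of ``exploiting the local structure around the centre'' can close the step for general $G$.

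You should not feel singled out: the paper's own proof of (b) has the same hole. It asserts that if $v$ is not adjacent to both $w$ and $w'$ then ``as in case (a) one can prove that $v$ is on another diametral path,'' but the case (a) argument applies only when $d(v,v_{0})+d(v,v_{d})=d$, and non-adjacency to the centre gives no reason for that equality to hold; the example above refutes the implication. The lemma is invoked later only for graphs all of whose spanning trees are caterpillars of diameter $d$ (and for single vertices adjoined to such a subgraph), and some hypothesis of that kind is what would be needed to make (b) true. As stated for ``any graph,'' both your attempt and the paper's proof fail on the same case; yours at least has the merit of locating the difficulty precisely.
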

\begin{proof}(a) Let $G$ be any graph and $P_{d+1} = v_{0}-v_{1}-...-v_{d}$ be a diametral path joining $v_{0}$ and $v_{d}$ such that $C(P_{d+1})$ = \{$w$\}. Let $v \in V(G \setminus P_{d+1})$ such that $\ep(v)$ = $d/2$. If possible then assume that $v$ is not on any diametral path joining $v_{0}$ and $v_{d}$. Then it is clear that either $d(v,v_{0}) > d/2$ or $d(v,v_{d}) > d/2$; otherwise the shortest path $P^{'} = v_{0} -...- v -...- v_{d}$ is a diametral path as $\ep(v)$ = $d/2$, $d(v_{0},v_{d})$ = $d$ and $\ep(u) \geq d/2$ for any $u \in V(G)$. But note that one of $d(v,v_{0}) > d/2$ or $d(v,v_{d}) > d/2$ gives $\ep(v) > d/2$, a contradiction. Hence $v$ is on diametral path joining $v_{0}$ and $v_{d}$.

(b) Let $G$ be any graph and $P_{d+1} = v_{0}-v_{1}-...-v_{d}$ be a diametral path joining $v_{0}$ and $v_{d}$ such that $C(P_{d+1})$ = \{$w,w^{'}$\}. It is clear that if $v$ is adjacent to both $w$ and $w^{'}$ then $\ep(v)$ = $(d+1)/2$. So assume that $\ep(v)$ = $(d+1)/2$ for some $v \in V(G \setminus P_{d+1})$ and $v$ is not adjacent to $w$ and $w^{'}$ then as in case (a) one can prove that $v$ is on other diametral path joining $v_{0}$ and $v_{d}$.
\end{proof}

\begin{lemma}\label{lem2} Let $G$ be any graph and $v \in V(G)$ such that $\ep(v)$ = $\lceil d/2 \rceil$ then $\d(v) \geq 2$.
\end{lemma}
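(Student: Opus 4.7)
The plan is to fix a diametral path $P_{d+1} = v_0 - v_1 - \ldots - v_d$ and split the argument according to whether $v$ lies on this path or off it, invoking Lemma \ref{lem1} in the latter situation. In either case the goal is to exhibit two distinct neighbors of $v$ in $G$.

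First I would treat the case $v \in V(P_{d+1})$. Writing $v = v_i$, one has $\ep(v) \geq \max\{d(v,v_0), d(v,v_d)\} = \max\{i, d-i\} \geq \lceil d/2 \rceil$, with equality forcing $i \in \{\lfloor d/2 \rfloor, \lceil d/2 \rceil\}$. Since the statement is used in the context $d \geq 3$, such an $i$ is strictly between $0$ and $d$, so $v$ is an internal vertex of $P_{d+1}$ and already has the two path-neighbors $v_{i-1}$ and $v_{i+1}$, giving $\d(v) \geq 2$.

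Next, for $v \notin V(P_{d+1})$, I would invoke Lemma \ref{lem1}. If $d$ is even, then by Lemma \ref{lem1}(a) the vertex $v$ lies on some diametral path $v_0 - \cdots - v - \cdots - v_d$; being distinct from the two endpoints, $v$ has two neighbors on that path. If $d$ is odd, Lemma \ref{lem1}(b) gives two sub-cases: either $v$ again lies on another diametral path joining $v_0$ and $v_d$ (and the previous argument applies), or $v$ is adjacent to both central vertices $w$ and $w'$ of $P_{d+1}$, which also produces two distinct neighbors. In every situation one obtains $\d(v) \geq 2$.

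The proof is essentially a direct case check, and there is no substantive obstacle; the only point requiring care is that Lemma \ref{lem1} is phrased for $v$ off the fixed diametral path, so one must separately observe that any on-path vertex with eccentricity $\lceil d/2 \rceil$ is an interior vertex of $P_{d+1}$ and hence automatically has degree at least two on the path.
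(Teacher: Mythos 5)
Your proof is correct and follows essentially the same route as the paper: both reduce the claim to the case analysis supplied by Lemma \ref{lem1} (on the fixed diametral path, on another diametral path, or adjacent to both central vertices) and exhibit two neighbors in each case. Your treatment of the on-path case is in fact slightly more careful than the paper's, which simply asserts $v \in C(P_{d+1})$ without noting that such a vertex must be interior to the path.
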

\begin{proof} By Lemma \ref{lem1} if $\ep(v)$ = $\lceil d/2 \rceil$ then either $v \in C(P_{d+1})$ or $v$ is on other diametral path joining $v_{0}$ and $v_{d}$ where $P_{d+1} = v_{0} - v_{1} -...-v_{d}$ is a diametral path joining $v_{0}$ and $v_{d}$ or $v$ is adjacent to both $w$ and $w^{'}$ in the case when $C(P_{d+1})$ =\{$w,w^{'}$\}. Hence in any case $\d(v) \geq 2$.
\end{proof}

\section{Main Result}
In this section, we continue to use the terminology and notation defined in previous section. First we prove the following Theorem.

\begin{theorem}\label{main1} Let $G$ = $(V,E)$ be a connected graph of order $n$, and diameter $d \geq 3$ such that every spanning tree $T$ of $G$ is a caterpillar of diameter $d$. Then
\begin{equation}\xi^{c}(G) \geq \xi^{c}(V_{n,d}).\end{equation}
\end{theorem}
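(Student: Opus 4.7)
The plan is to exploit the caterpillar-spanning-tree hypothesis to obtain a purely local adjacency property of $G$, then split $\xi^{c}(G)$ over the vertices on and off a fixed diametral path, bounding each piece using Lemma \ref{lem2} together with the eccentricity estimates already established.

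Fix a diametral path $P_{d+1} = v_{0}-v_{1}-\cdots-v_{d}$ of $G$, and let $S = V(G) \setminus V(P_{d+1})$. The first task is to show that \emph{every $u \in S$ is adjacent in $G$ to at least one $v_{i}$}. If not, take a spanning tree $T$ of $G$ that contains $P_{d+1}$ together with a shortest $u$-to-$P_{d+1}$ path of length at least $2$. Then either $T$ has diameter strictly greater than $d$ (if $u$'s attachment vertex lies close to an endpoint of $P_{d+1}$) or, upon pruning leaves, the spine of $T$ is not a path because a branch vertex arises at some interior $v_{i}$ with $2 \leq i \leq d-2$. Either way $T$ fails to be a caterpillar of diameter $d$, contradicting the hypothesis. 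Since $P_{d+1}$ is a geodesic of $G$ it admits no chord, so the degree of $v_{i}$ inside $P_{d+1}$ equals $1$ at the endpoints and $2$ otherwise.

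With this structure in hand the eccentricities are easy to control: applying the triangle inequality to $d(v_{0},v_{d}) = d$ together with $d(v_{0},v_{i}) \leq i$ and $d(v_{i},v_{d}) \leq d-i$ forces $\epsilon(v_{i}) \geq \max\{i, d-i\}$ for each path vertex. For every $u \in S$, Lemma \ref{lem2} gives $\epsilon(u) \geq \lceil d/2 \rceil$, with $\d(u) \geq 2$ in the case of equality, and $\d(u) \geq 1$ always by the adjacency claim, so $\epsilon(u)\d(u) \geq \lceil d/2 \rceil + 1$, which is exactly the pendant contribution in $V_{n,d}$. Decomposing $\d(v_{i}) = \d_{P_{d+1}}(v_{i}) + \d^{S}(v_{i})$, where $\d^{S}(v_{i})$ counts the neighbours of $v_{i}$ lying in $S$, splits the index as
\[
\xi^{c}(G) = \sum_{i=0}^{d} \epsilon(v_{i})\d_{P_{d+1}}(v_{i}) + \sum_{i=0}^{d} \epsilon(v_{i})\d^{S}(v_{i}) + \sum_{u \in S} \epsilon(u)\d(u).
\]
The first sum is at least $\xi^{c}(P_{d+1})$ by the path eccentricity bound; the second is at least $\lceil d/2 \rceil(n-d-1)$, since every $\epsilon(v_{i}) \geq \lceil d/2 \rceil$ and every $u \in S$ contributes at least one edge to $P_{d+1}$; the third is at least $(n-d-1)(\lceil d/2 \rceil + 1)$. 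Summing gives $\xi^{c}(G) \geq \xi^{c}(P_{d+1}) + (n-d-1)(2\lceil d/2 \rceil + 1)$, and a short parity-by-parity computation using Propositions 1 and 2 identifies this right-hand side with $\xi^{c}(V_{n,d})$.

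The hard part will be the structural reduction: translating the global hypothesis ``every spanning tree is a caterpillar of diameter $d$'' into the local adjacency statement requires a small case analysis on the length of the hypothetical shortest $u$-to-$P_{d+1}$ path and on the position of its endpoint on $P_{d+1}$ (distinguishing attachments close to $v_{0}$ or $v_{d}$, which blow up the diameter, from attachments in the middle, which spoil the caterpillar shape). Once that reduction is in place, the remaining decomposition and edge-counting are routine.
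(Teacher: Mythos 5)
Your argument is correct, and it reaches the bound by the same basic route as the paper --- fix a diametral path $P_{d+1}$, write $\xi^{c}(G)$ as the sum of the path-internal degree contribution, the cross-edge contribution to path vertices, and the contribution of off-path vertices, then bound each piece using $\epsilon(v)\geq\lceil d/2\rceil$ and Lemma \ref{lem2}. The difference is in how the last two pieces are handled, and your version is genuinely leaner. The paper partitions the off-path vertices into four classes according to whether their eccentricity equals or exceeds $\lceil d/2\rceil$ and whether they are adjacent to $C(P_{d+1})$, tracks the six resulting counts $n_{1},n_{2},n_{11},\dots$ through a chain of inequalities that invokes Lemma \ref{lem1} (each minimum-eccentricity off-path vertex lies on a diametral path, hence sends at least two edges into $P$), and runs the whole computation twice, once for each parity of $d$. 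You collapse all of this into two observations: every $u\notin P_{d+1}$ satisfies $\epsilon(u)\,\d(u)\geq\lceil d/2\rceil+1$ (degree at least $2$ when the eccentricity is minimal, by Lemma \ref{lem2}; eccentricity at least $\lceil d/2\rceil+1$ otherwise), and every such $u$ sends at least one edge to the path, so the cross-edge sum is at least $\lceil d/2\rceil(n-d-1)$. Together with $\xi^{c}(P_{d+1})$ this gives $\xi^{c}(P_{d+1})+(n-d-1)(2\lceil d/2\rceil+1)=\xi^{c}(V_{n,d})$ uniformly in the parity, which is exactly the marginal cost of each pendant vertex in the volcano graph; this also makes the equality case transparent. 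One remark on the only non-routine step: your adjacency claim is where the caterpillar hypothesis enters (the paper uses it too, but only via the unproved assertion that $P_{d+1}$ is the central path of every spanning caterpillar), and your contradiction argument with its case split on the attachment point can be replaced by a more direct one --- extend $P_{d+1}$ to a spanning tree $T$; by hypothesis $T$ is a caterpillar of diameter $d$, so $P_{d+1}$ is a longest path of $T$ and therefore contains the spine $T$ minus its leaves, whence every vertex off $P_{d+1}$ is a leaf of $T$ hanging from a spine vertex, i.e., adjacent in $G$ to a vertex of $P_{d+1}$. With that tightened, your proof is complete and somewhat shorter than the paper's.
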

\begin{proof} Let $G$ be a connected graph of order $n$ and diameter $d \geq 3$ such that every spanning tree $T$ of $G$ is a caterpillar of diameter $d$. Let $P_{d+1}$ = $v_{0}-v_{1}-...-v_{d}$ be a fixed diametral path of a graph $G$. It is clear that a caterpillar $T$ contains $P_{d+1}$ and it is the central path of $T$. Then define \\
$P = \{v \in V(G) : v \in V(P_{d+1})\}$; \\
$P^{\prime} = \{v \in V(G) : v \not\in V(P_{d+1})\}$; \\
$P_{c} = \{v \in P : \epsilon(v) = \lceil d/2 \rceil\}$; \\
$P_{c^{\prime}} = \{v \in P : \epsilon(v) > \lceil d/2 \rceil\}$; \\
$P_{c}^{\prime} = \{v \in P^{\prime} : \epsilon(v) = \lceil d/2 \rceil \}$; \\
$P_{c^{\prime}}^{\prime} = \{v \in P^{\prime} : \epsilon(v) > \lceil d/2 \rceil \}$; \\
$P_{cc}^{\prime} = \{v \in P_{c}^{\p} : v \mbox{ is adjacent to } C(P_{d+1})\}$; \\
$P_{cc^{\prime}}^{\prime} = \{v \in P_{c}^{\p} : v \mbox{ is not adjacent to } C(P_{d+1})\}$; \\
$P_{c^{\prime}c}^{\prime} = \{v \in P_{c^{\p}}^{\p} : v \mbox{ is adjacent to } C(P_{d+1})\}$; \\
$P_{c^{\prime}c^{\prime}}^{\prime} = \{v \in P_{c^{\p}}^{\p} : v \mbox{ is not adjacent to } C(P_{d+1})\}$.

Let $|P_{c}^{\prime}|$ = $n_{1}$ and $|P_{c^{\prime}}^{\prime}|$ = $n_{2}$, where $0 \leq n_{1},n_{2} \leq n-d-1$; $|P_{cc}^{\prime}|$ = $n_{11}$ and $|P_{cc^{\prime}}^{\prime}|$ = $n_{12}$, where $0 \leq n_{11},n_{12} \leq n_{1}$; $|P_{c^{\prime}c}^{\prime}|$ = $n_{21}$ and $|P_{c^{\prime}c^{\prime}}^{\prime}|$ = $n_{22}$, where $0 \leq n_{21},n_{22} \leq n_{2}$. Note that $n_{1}+n_{2}$ = $n-d-1$, $n_{11}+n_{12}$ = $n_{1}$ and $n_{21}+n_{22}$ = $n_{2}$. Moreover, $V(G)$ = $P \cup P^{\prime}$ = $P_{c} \cup P_{c^{\prime}} \cup P_{c}^{\prime} \cup P_{c^{\prime}}^{\prime}$ = $P_{c} \cup P_{c^{\prime}} \cup P_{cc}^{\prime} \cup P_{cc^{\prime}}^{\prime} \cup P_{c^{\prime}c}^{\prime} \cup P_{c^{\prime}c^{\prime}}^{\prime}$.

We consider the following two cases. \\
\textsf{Case 1}: $d$ is even.
\begin{eqnarray*}
\xi^{c}(G) & = & \dis\sum_{v \in v(G)} \ep(v)\d(v) \\
& = & \dis\sum_{v \in P} \ep(v)\d(v) + \dis\sum_{v \in P^{\prime}}  \ep(v)\d(v) \\
& = & \dis\sum_{v \in P} \ep(v) ( \d(v|P)+\d(v|P^{\p}) ) + \dis\sum_{v \in P_{c}^{\p}} \ep(v)\d(v) + \dis\sum_{v \in P_{c^{\p}}^{\p}} \ep(v)\d(v) \\
& = & \dis\sum_{v \in P} \ep(v)\d(v|P) + \dis\sum_{v \in P} \ep(v)\d(v|P^{\p}) + \dis\sum_{v \in P_{c}^{\p}} \ep(v)\d(v) + \dis\sum_{v \in P_{c^{\p}c}^{\p}} \ep(v)\d(v) + \\
& & \dis\sum_{v \in P_{c^{\p}c^{\p}}^{\p}} \ep(v)\d(v) \\
& = & \dis\sum_{v \in P} \ep(v)\d(v|P) + \dis\sum_{v \in P_{c}} \ep(v)\d(v|P^{\p}) + \dis\sum_{v \in P_{c^{\p}}} \ep(v)\d(v|P^{\p}) + \dis\sum_{v \in P_{c}^{\p}} \ep(v)\d(v) + \\
& & \dis\sum_{v \in P_{c^{\p}c}^{\p}} \ep(v)\d(v) + \dis\sum_{v \in P_{c^{\p}c^{\p}}^{\p}} \ep(v)\d(v) \\
& \geq & \frac{3}{2}d^{2}+n_{21}\(\frac{d}{2}\)(1) + (2n_{1}+n_{22})\(\frac{d}{2}+1\)(1) + n_{1}\(\frac{d}{2}\)(2) + \\
& & n_{21}\(\frac{d}{2}+1\)(1) + n_{22}\(\frac{d}{2}+1\)(1) \\
& \geq & \frac{3}{2}d^{2}+n_{21}\(\frac{d}{2}\) + n_{1}\(\frac{d}{2}+1\) + n_{22}\(\frac{d}{2}+1\) + n_{1}\(\frac{d}{2}\) \\
& & + n_{21}\(\frac{d}{2}+1\) + n_{22}\(\frac{d}{2}+1\) \\
& \geq & \frac{3}{2}d^{2}+n_{21}\(\frac{d}{2}\) + n_{1}\(\frac{d}{2}+1\) + n_{22}\(\frac{d}{2}\) + n_{1}\(\frac{d}{2}\) + n_{21}\(\frac{d}{2}+1\) + \\ & & n_{22}\(\frac{d}{2}+1\) \\
& = & \frac{3}{2}d^{2} + n_{1}\(\frac{d}{2}\) + n_{1}\(\frac{d}{2}+1\) + n_{2}\(\frac{d}{2}\) + n_{2}\(\frac{d}{2}+1\) \\
& = & \frac{3}{2}d^{2} + (n_{1}+n_{2})\(\frac{d}{2}\) + (n_{1}+n_{2})\(\frac{d}{2}+1\) \\
& = & \frac{3}{2}d^{2} + (n-d-1)\(\frac{d}{2}\) + (n-d-1)\(\frac{d}{2}+1\) \\
& = & nd + n + \frac{d^{2}}{2}-2d-1 \\
& = & \xi^{c}(V_{n,d}).
\end{eqnarray*}
\textsf{Case 2}: $d$ is odd.
\begin{eqnarray*}
\xi^{c}(G) & = & \dis\sum_{v \in v(G)} \ep(v)\d(v) \\
& = & \dis\sum_{v \in P} \ep(v)\d(v) + \dis\sum_{v \in P^{\prime}}  \ep(v)\d(v) \\
& = & \dis\sum_{v \in P} \ep(v) ( \d(v|P)+\d(v|P^{\p}) ) + \dis\sum_{v \in P_{c}^{\p}} \ep(v)\d(v) + \dis\sum_{v \in P_{c^{\p}}^{\p}} \ep(v)\d(v) \\
& = & \dis\sum_{v \in P} \ep(v)\d(v|P) + \dis\sum_{v \in P} \ep(v)\d(v|P^{\p}) + \dis\sum_{v \in P_{c}^{\p}} \ep(v)\d(v) + \dis\sum_{v \in P_{c^{\p}c}^{\p}} \ep(v)\d(v) + \\
& & \dis\sum_{v \in P_{c^{\p}c^{\p}}^{\p}} \ep(v)\d(v) \\
& = & \dis\sum_{v \in P} \ep(v)\d(v|P) + \dis\sum_{v \in P_{c}} \ep(v)\d(v|P^{\p}) + \dis\sum_{v \in P_{c^{\p}}} \ep(v)\d(v|P^{\p}) + \dis\sum_{v \in P_{c}^{\p}} \ep(v)\d(v) + \\
& & \dis\sum_{v \in P_{c^{\p}c}^{\p}} \ep(v)\d(v) + \dis\sum_{v \in P_{c^{\p}c^{\p}}^{\p}} \ep(v)\d(v) \\
& \geq & \frac{3}{2}d^{2}+\frac{1}{2} + \(2n_{11}+n_{21}\)\(\frac{d+1}{2}\)(1) + \(2n_{12}+n_{22}\)\(\frac{d+3}{2}\)(1) + \\
& & n_{1}\(\frac{d}{2}\)(2) + n_{21} \(\frac{d+3}{2}\)(1) + n_{22}\(\frac{d+3}{2}\)(1) \\
& \geq & \frac{3}{2}d^{2}+\frac{1}{2} + n_{11}\(\frac{d+3}{2}\) + n_{12}\(\frac{d+3}{2}\) + n_{21}\(\frac{d+1}{2}\) + \\
& & n_{22}\(\frac{d+3}{2}\) + n_{1}\(\frac{d+1}{2}\) +\(n_{21}+n_{22}\)\(\frac{d+3}{2}\) \\
& = & \frac{3}{2}d^{2}+\frac{1}{2} + \(n_{11}+n_{12}\)\(\frac{d+3}{2}\) + \(n_{21}+n_{22}\)\(\frac{d+1}{2}\) + \\
& & n_{1}\(\frac{d+1}{2}\) + n_{2}\(\frac{d+3}{2}\) \\
& = & \frac{3}{2}d^{2}+\frac{1}{2} + n_{1}\(\frac{d+3}{2}\) + n_{2}\(\frac{d+1}{2}\) + n_{1}\(\frac{d+1}{2}\) + n_{2}\(\frac{d+3}{2}\) \\
& = & \frac{3}{2}d^{2}+\frac{1}{2} + \(n_{1}+n_{2}\)\(\frac{d+3}{2}\) + \(n_{1}+n_{2}\)\(\frac{d+1}{2}\) \\
& = & \frac{3}{2}d^{2}+\frac{1}{2} + \(n-d-1\)\(\frac{d+3}{2}\) + \(n-d-1\)\(\frac{d+1}{2}\) \\
& = & \frac{3}{2}d^{2}+\frac{1}{2} + \(n-d-1\)\(d+1\) + \(n-d-1\) \\
& = & nd+2n-\frac{d^{2}}{2}-3d-\frac{3}{2} \\
& = & \xi^{c}(V_{n,d}).
\end{eqnarray*}
\end{proof}

\begin{theorem}\label{main2} Let $G$ be a connected graph of order $n$ and diameter $d \geq 2$. Then
\begin{equation} \xi^{c}(G) \geq \xi^{c}(V_{n,d}). \end{equation}
\end{theorem}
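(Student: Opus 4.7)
My plan is to imitate the proof of Theorem~\ref{main1}, reusing its partition of $V(G)$ and its six-term case analysis, but to relax the caterpillar-spanning-tree hypothesis by explicitly accounting for edges inside $P^{\prime}$. Fix a diametral path $P_{d+1}=v_{0}-v_{1}-\cdots-v_{d}$ of $G$ and keep all of the sets $P$, $P^{\prime}$, $P_{c}$, $P_{c^{\prime}}$, $P_{c}^{\prime}$, $P_{c^{\prime}}^{\prime}$, $P_{cc}^{\prime}$, $P_{cc^{\prime}}^{\prime}$, $P_{c^{\prime}c}^{\prime}$, $P_{c^{\prime}c^{\prime}}^{\prime}$ together with the cardinalities $n_{1}, n_{2}, n_{11}, n_{12}, n_{21}, n_{22}$ as defined there.

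The first step is to expand $\xi^{c}(G)=\sum_{v\in P}\epsilon(v)\d(v|P)+\sum_{v\in P}\epsilon(v)\d(v|P^{\prime})+\sum_{v\in P^{\prime}}\epsilon(v)\d(v)$ and to lower bound $\sum_{v\in P}\epsilon(v)\d(v|P)\geq\xi^{c}(P_{d+1})$, using the fact that $\epsilon_{G}(v_{i})\geq\max(i,d-i)$ since $P_{d+1}$ is a geodesic in $G$. Next, Lemma~\ref{lem2} gives $\d(v)\geq 2$ for every $v\in P_{c}^{\prime}$ and $\d(v)\geq 1$ for every $v\in P_{c^{\prime}}^{\prime}$; combined with $\epsilon(v)=\lceil d/2\rceil$ for $v\in P_{c}^{\prime}$ and $\epsilon(v)\geq\lceil d/2\rceil+1$ for $v\in P_{c^{\prime}}^{\prime}$, these yield exactly the per-vertex bounds used in Theorem~\ref{main1} for $\sum_{v\in P^{\prime}}\epsilon(v)\d(v)$. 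The mixed term $\sum_{v\in P}\epsilon(v)\d(v|P^{\prime})$ is then bounded by the center-vs-noncenter dichotomy on $P$: for the central vertex (or the two central vertices) of $P_{d+1}$ we use $\epsilon=\lceil d/2\rceil$, and elsewhere $\epsilon\geq\lceil d/2\rceil+1$.

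The main obstacle is that, once the caterpillar hypothesis is dropped, a vertex of $P^{\prime}$ may be adjacent to other vertices of $P^{\prime}$, so such $P^{\prime}$-$P^{\prime}$ edges are counted at both endpoints in $\sum_{v\in P^{\prime}}\epsilon(v)\d(v)$. I would handle this by passing to the edge-based identity $\xi^{c}(G)=\sum_{uv\in E(G)}(\epsilon(u)+\epsilon(v))$ and noting that, because $G$ is connected and $P_{d+1}$ is chord-free, there are at least $n-d-1$ edges outside $E(P_{d+1})$, each contributing at least $2\lceil d/2\rceil$. The remaining deficit of one per edge, which occurs only when both endpoints have eccentricity $\lceil d/2\rceil$, is supplied by the extra degree forced by Lemma~\ref{lem2} together with Lemma~\ref{lem1}, which pins every such endpoint either to $C(P_{d+1})$ or onto another diametral path joining $v_{0}$ and $v_{d}$. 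After this bookkeeping, the same arithmetic simplification used for both parities of $d$ in Theorem~\ref{main1} produces $\xi^{c}(G)\geq\xi^{c}(V_{n,d})$.
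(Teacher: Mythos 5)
Your overall strategy --- keeping the partition from Theorem~\ref{main1} and absorbing the edges inside $P^{\prime}$ through the identity $\xi^{c}(G)=\sum_{uv\in E(G)}(\epsilon(u)+\epsilon(v))$ --- is genuinely different from the paper's proof, which instead fixes a subgraph $G_{0}$ satisfying the hypotheses of Theorem~\ref{main1} and adds the remaining vertices one at a time, crediting each new vertex with at least $\lceil d/2\rceil+(\lceil d/2\rceil+1)$, i.e.\ exactly the increment $\xi^{c}(V_{m+1,d})-\xi^{c}(V_{m,d})$. Your reduction to the edge sum is sound as far as it goes: $\sum_{uv\in E(P_{d+1})}(\epsilon(u)+\epsilon(v))\geq\xi^{c}(P_{d+1})$ because $P_{d+1}$ is a chordless geodesic, there are at least $n-d-1$ further edges, each contributes at least $2\lceil d/2\rceil$, and the target $\xi^{c}(V_{n,d})-\xi^{c}(P_{d+1})=(n-d-1)(2\lceil d/2\rceil+1)$ leaves a deficit of exactly one unit per edge, which you correctly locate at the edges whose two endpoints both have eccentricity $\lceil d/2\rceil$.

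The gap is that the sentence ``the remaining deficit \dots is supplied by the extra degree forced by Lemma~\ref{lem2} together with Lemma~\ref{lem1}'' is not an argument. The extra degree guaranteed by Lemma~\ref{lem2} is already spent in the count $|E(G)\setminus E(P_{d+1})|\geq n-d-1$; to close the deficit you must prove that the number $B$ of ``bad'' edges (both endpoints of eccentricity $\lceil d/2\rceil$) satisfies $B\leq(|E(G)|-n+1)(2\lceil d/2\rceil+1)$, i.e.\ that bad edges force enough surplus edges beyond a spanning tree. This is provable but needs a genuine extra lemma, for instance that deleting all bad edges leaves $G$ connected: for $d$ even this follows from Lemma~\ref{lem1}(a), since each endpoint of a bad edge retains neighbours at distance $d/2-1$ from $v_{0}$ and from $v_{d}$, and these have strictly larger eccentricity, whence $B\leq|E(G)|-n+1$. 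For $d$ odd the deletion argument fails outright: a vertex $v\in P^{\prime}$ adjacent to exactly $w$ and $w^{\prime}$ with $\epsilon(w)=\epsilon(w^{\prime})=(d+1)/2$ has only bad edges, and deleting them isolates $v$; such configurations must be charged differently (here the two bad edges at $v$ jointly contribute $2(d+1)\geq d+2$, which suffices only because $w,w^{\prime}$ lie on $P_{d+1}$ and need no credit of their own). Until this case analysis is written out, the proof is incomplete. A smaller issue: the first two steps of your plan (the six-term decomposition and the centre-versus-noncentre bound on $\sum_{v\in P}\epsilon(v)\d(v|P^{\prime})$) silently assume every vertex of $P^{\prime}$ has a neighbour on $P$, which is essentially the caterpillar hypothesis you are trying to drop; those bounds are superseded by, not combined with, the edge-based count, and keeping both invites double counting.
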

\begin{proof} Let $G_{0} \subseteq G_{1} \subseteq \ldots \subseteq G_{k} = G$ be a sequence of subgraphs such that $G_{0}$ is a connected subgraph of $G$ which contain a diametral path $P_{d+1}$ and every spanning tree of $G_{0}$ is a caterpillar of diameter $d$, and $G_{i+1}$ ($0 \leq i \leq k-1$) is a induced subgraph of $G$ with vertex set $V(G_{i+1})$ = $V(G_{i}) \cup \{v\}, v \in V(G \setminus G_{i})$. Let the order of $G_{i}$ is $n_{i}$ then $|G_{k}|$ = $n_{k}$ = $n$. Then by Theorem \ref{main1}, we obtain $\xi^{c}(G_{0}) \geq V_{n_{0},d}$. Now consider the graph $G_{1}$ = $G_{0} \cup \{v\}$ where $v \in V(G \setminus G_{0})$. Note that for a newly added vertex $v$ in $G_{0}$, either $\ep(v) > \lceil d/2 \rceil$ or $\ep(v)$ = $\lceil d/2 \rceil$. If $\ep(v) > \lceil d/2 \rceil$ then it contribute one degree for some vertex of $G_{0}$ and hence $v$ contribute at least $\(\lceil d/2 \rceil\)(1) + \(\lceil d/2 \rceil+1\)(1)$ for $\xi^{c}(G_{1})$ as $G_{1}$ is connected and $\ep(u) \geq d/2$ for every $u \in V(G_{0})$. Hence we obtain, $\xi^{c}(G_{1}) \geq \xi^{c}(G_{0}) + \(\lceil d/2 \rceil\)(1)  + \(\lceil d/2 \rceil+1\)(1) = \xi^{c}(V_{n_{0},d}) + \(\lceil d/2 \rceil\)(1) + \(\lceil d/2 \rceil+1\)(1) = \xi^{c}(V_{n_{1},d})$. If $\ep(v)$ = $\lceil d/2 \rceil$ then by Lemma \ref{lem2}, $\d(v) \geq 2$ and it adjacent to at least two vertices of $G_{0}$. Hence $v$ contribute at least $4 \(\lceil d/2 \rceil\) > \(\lceil d/2 \rceil\) + \(\lceil d/2 \rceil+1\)$  for $\xi^{c}(G_{1})$. Hence we obtain $\xi^{c}(G_{1}) \geq \xi^{c}(G_{0}) + 4\(\lceil d/2 \rceil\) \geq \xi^{c}(V_{n_{0},d}) + \(\lceil d/2 \rceil+1\)$ = $\xi^{c}(V_{n_{1},d})$.

Continuing in this way, finally we obtain $\xi^{c}(G_{k})$ $\geq$ $\xi^{c}(G_{k-1}) + \(\lceil d/2 \rceil+1\)(1) + \(\lceil d/2 \rceil\)(1)$ = $\xi^{c}(V_{n_{k-1},d}) + \(\lceil d/2 \rceil+1\)(1) + \(\lceil d/2 \rceil\)(1)$ = $\xi^{c}(V_{n_{k},d})$ = $\xi^{c}(V_{n,d})$.

Note that equality holds if at each step of above procedure equality holds and hence we obtain that volcano graph $V_{n,d}$ attain a lower bound which completes the proof.
\end{proof}

\begin{corollary} Let $T$ be a tree of order $n$ and diameter $d \geq 3$. Then
$$ \xi^{c}(T) \geq \xi^{c}(V_{n,d}).$$
\end{corollary}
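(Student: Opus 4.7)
The plan: the corollary is an immediate specialization of Theorem \ref{main2}, so essentially no new work is required. I would simply observe that every tree $T$ is a connected graph, and that the hypothesis $d \geq 3$ is strictly stronger than the hypothesis $d \geq 2$ needed by Theorem \ref{main2}; applying that theorem with $G = T$ then yields $\xi^{c}(T) \geq \xi^{c}(V_{n,d})$, which is exactly the stated inequality.

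If one preferred a proof internal to the tree category (which was the original setting of Morgan \emph{et al.}\ in \cite{Morgan1}), the same scheme still works and in fact simplifies, because a tree has a unique spanning tree, namely itself. The plan would be to first extract a caterpillar subtree $T_{0} \subseteq T$ containing the fixed diametral path $P_{d+1}$, for instance the subtree induced by $V(P_{d+1})$ together with every vertex of $T$ that is adjacent to an internal vertex of $P_{d+1}$. I would then apply Theorem \ref{main1} to $T_{0}$ to obtain $\xi^{c}(T_{0}) \geq \xi^{c}(V_{n_{0},d})$, where $n_{0} = |V(T_{0})|$, and afterwards attach the remaining vertices of $T$ one at a time, in any order that keeps the intermediate subgraphs connected, bounding each new contribution exactly as in the induction step of Theorem \ref{main2} and invoking Lemma \ref{lem2} to handle the case $\epsilon(v) = \lceil d/2 \rceil$.

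The only thing that would need to be checked is that the chosen base caterpillar $T_{0}$ satisfies the hypothesis of Theorem \ref{main1}, namely that every spanning tree of $T_{0}$ is a caterpillar of diameter $d$; but this is trivial, since a caterpillar is its own unique spanning tree. Hence there is no real obstacle, and the corollary follows at once from Theorem \ref{main2}.
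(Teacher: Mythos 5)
Your proposal is correct and matches the paper exactly: the corollary is stated there without proof precisely because, as you observe, a tree is a connected graph and the result is an immediate specialization of Theorem \ref{main2}. The additional tree-internal argument you sketch is unnecessary but consistent with the paper's method.
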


\begin{example} The readers are advised to refer the following example for the procedure used in Theorem \ref{main1} and \ref{main2} to give a lower bound for the eccentric connectivity index of graphs.
\end{example}
In Fig. \ref{Figure1}, the graph $G$ of order 19 and diameter 7 is shown in which $P_{8} = v_{0}-v_{1}-...-v_{7}$ is a fixed diametral path and the vertices with circle are vertices with minimum eccentricity.

\begin{figure}[h!]
\begin{center}
\includegraphics[width=7.5cm]{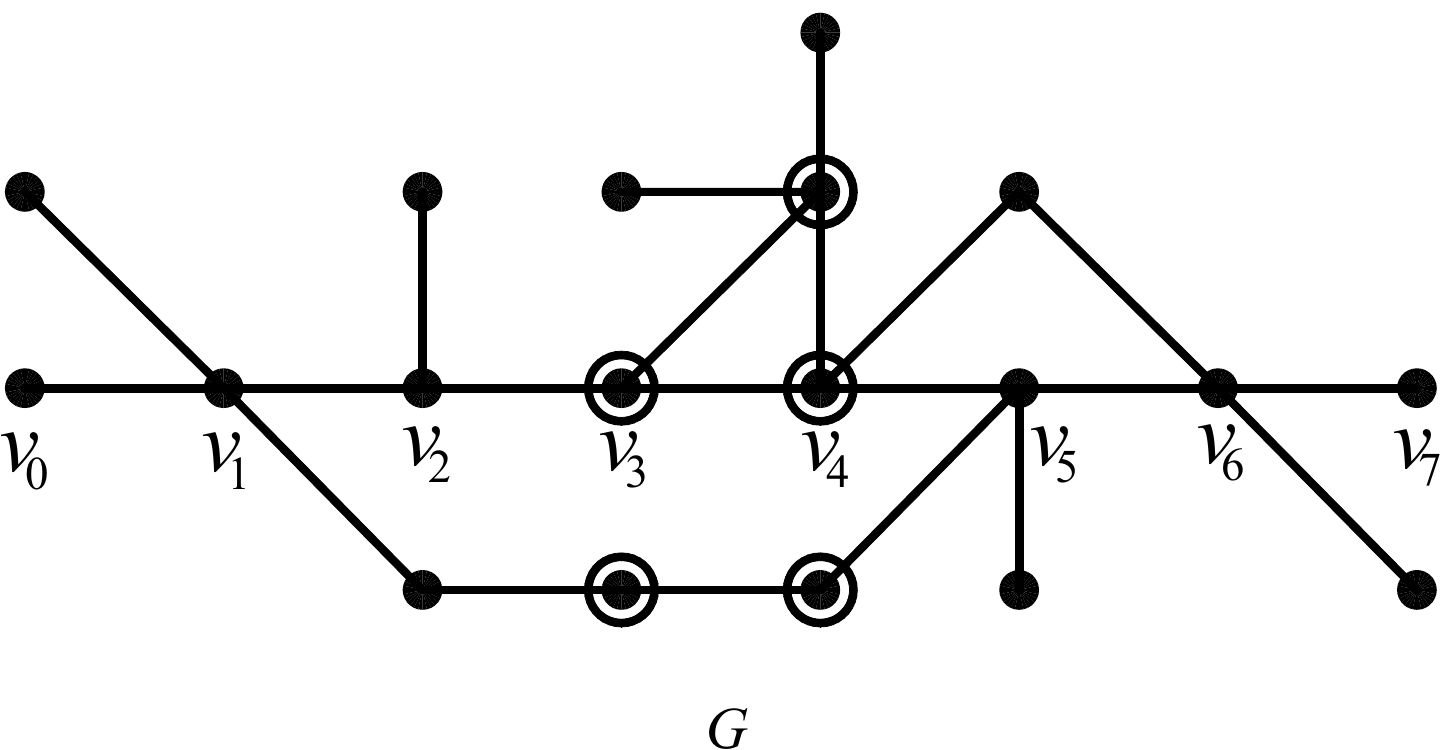}
\caption{Graph $G$ of order 19 and diameter 7.}\label{Figure1}
\end{center}
\end{figure}

In Fig. \ref{Figure2}, a sequence of subgraphs $G_{0} \subset G_{1} \subset G_{2} \subset G_{3} = G$ of $G$ with ordered pair whose first coordinate denote vertex degree and second coordinate denote eccentricity of that vertex in $G_{i}, 0 \leq i \leq 3$ is shown. It is clear that $G_{0}$ is a graph whose each spanning tree is a caterpillar of diameter 7 and $G_{i+1}$ = $G_{i} \cup \{v\} (0 \leq i \leq 2)$ for some $v \in G \setminus G_{i}$. Moreover, $|G_{0}|$ = 16, $|G_{1}|$ = 17, $|G_{2}|$ = 18, $|G_{3}|$ = $|G|$ = 19 and $\diam(G_{i})$ = 7 for $0 \leq i \leq3$. Note that $\xi^{c}(G_{0})$ = 182, $\xi^{c}(G_{1})$ = 195, $\xi^{c}(G_{2})$ = 204 and $\xi^{c}(G_{3})$ = $\xi^{c}(G)$ = 213 (The readers can calculate it using the ordered pair at each vertex in $G_{i}$). Using (\ref{eci:vnd}), it is easy to calculate that $\xi^{c}(V_{16,7})$ = 146, $\xi^{c}(V_{17,7})$ = 155,  $\xi^{c}(V_{18,7})$ = 164 and $\xi^{c}(V_{19,7})$ = 173. It is clear from above that $\xi^{c}(G_{0}) \geq \xi^{c}(V_{16,7})$, $\xi^{c}(G_{1}) \geq \xi^{c}(V_{17,7})$, $\xi^{c}(G_{2}) \geq \xi^{c}(V_{18,7})$ and $\xi^{c}(G_{3}) = \xi^{c}(G) \geq \xi^{c}(V_{19,7})$.

\begin{figure}[h!]
\includegraphics[width=5.5cm]{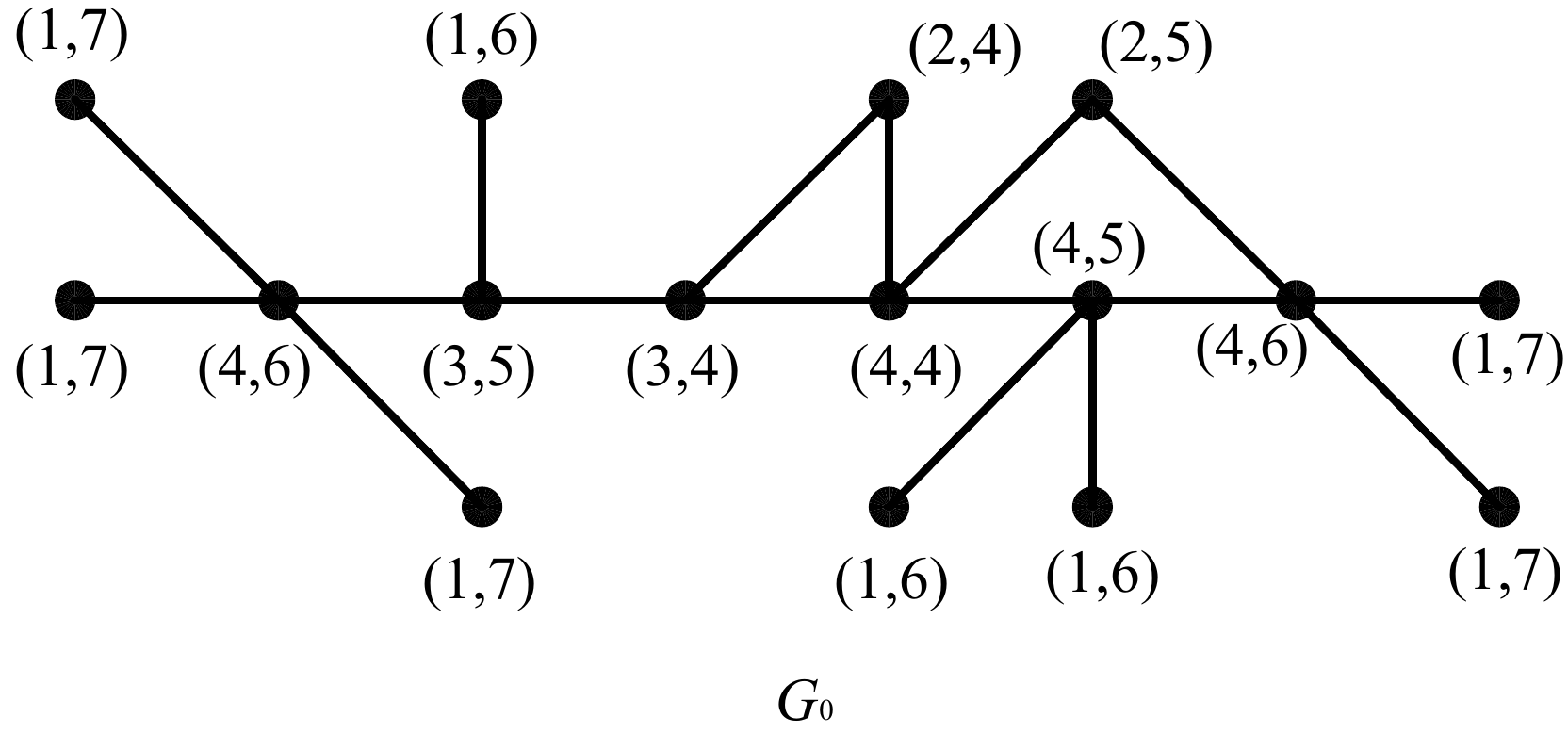}\hfill
\includegraphics[width=5.5cm]{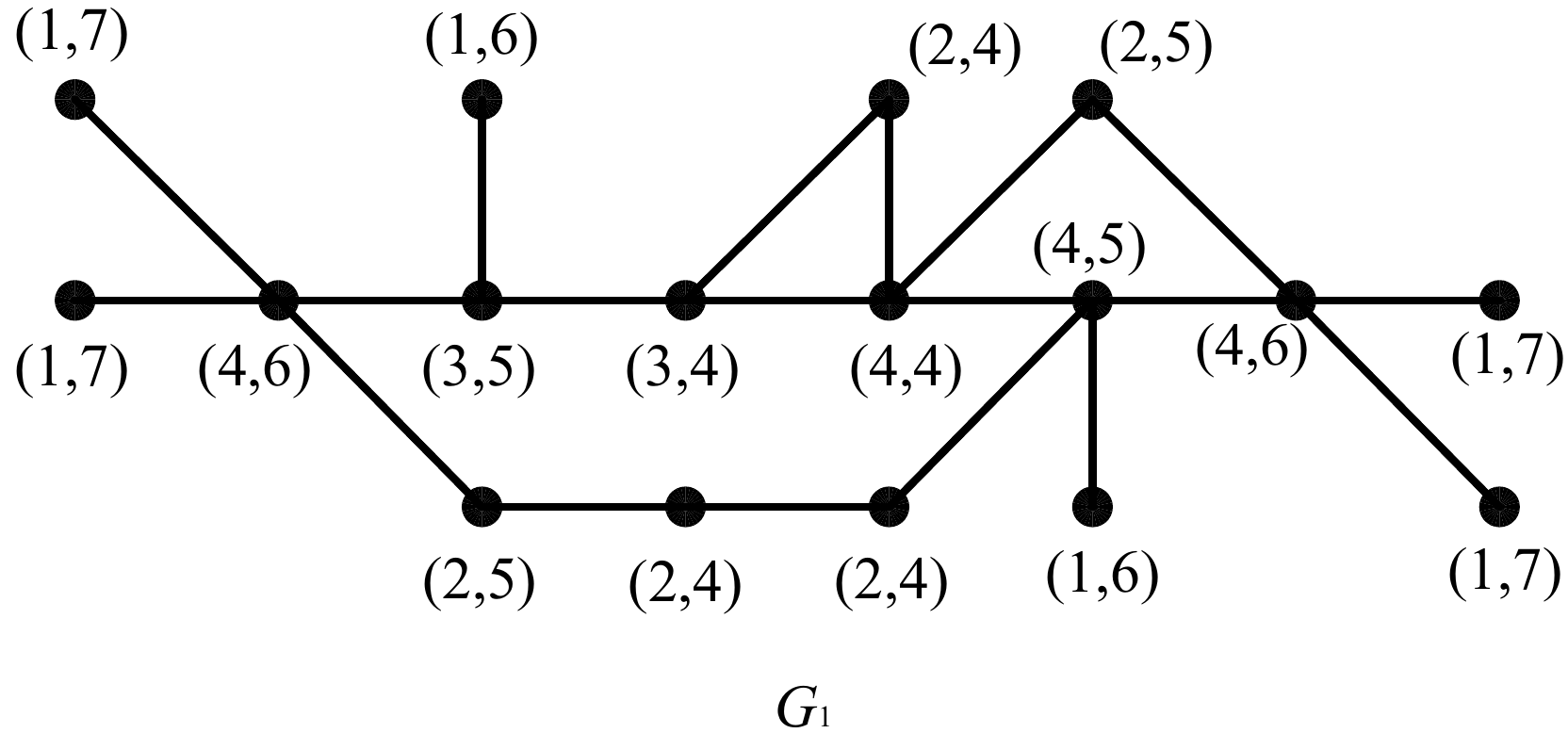} \\ \\
\includegraphics[width=5.5cm]{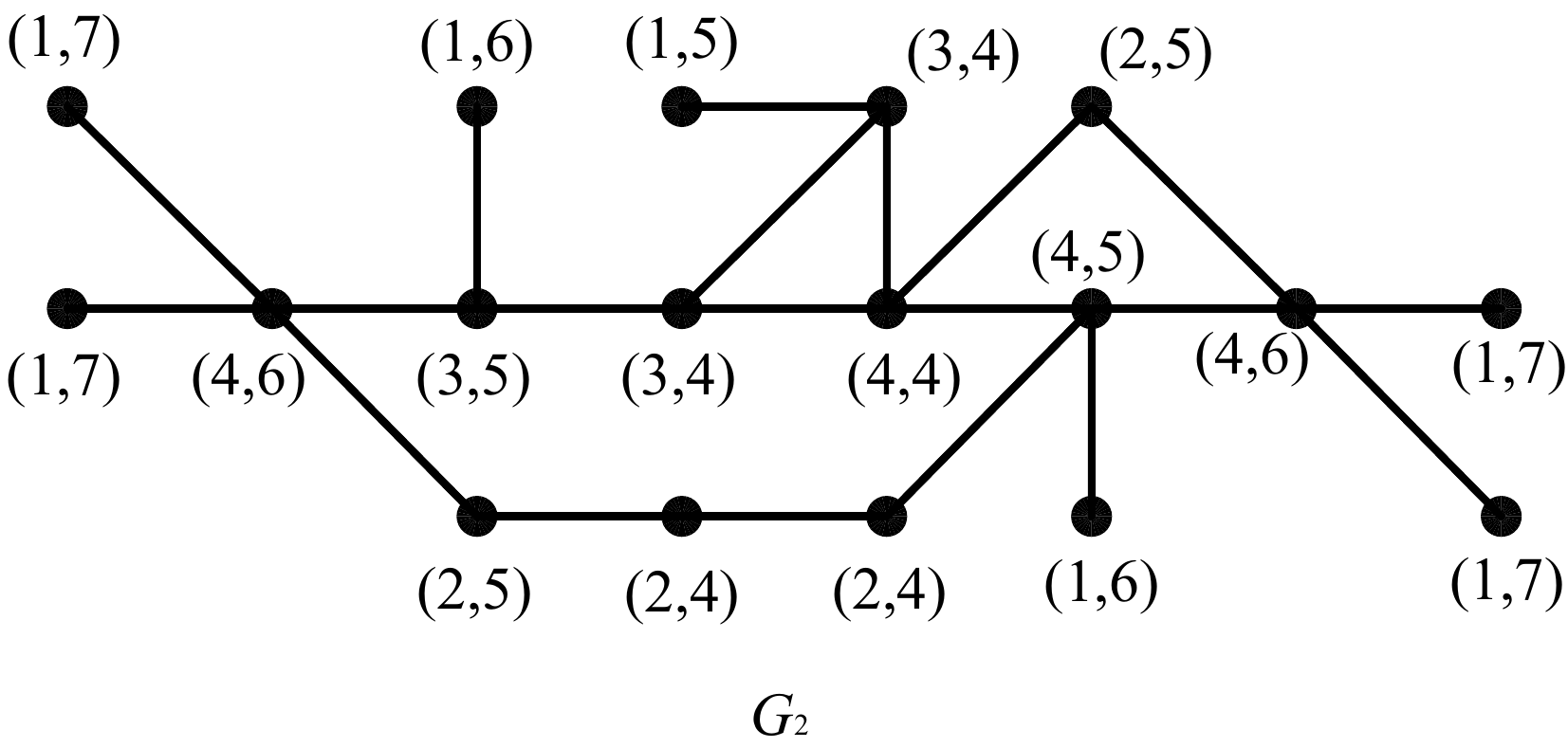}\hfill
\includegraphics[width=5.5cm]{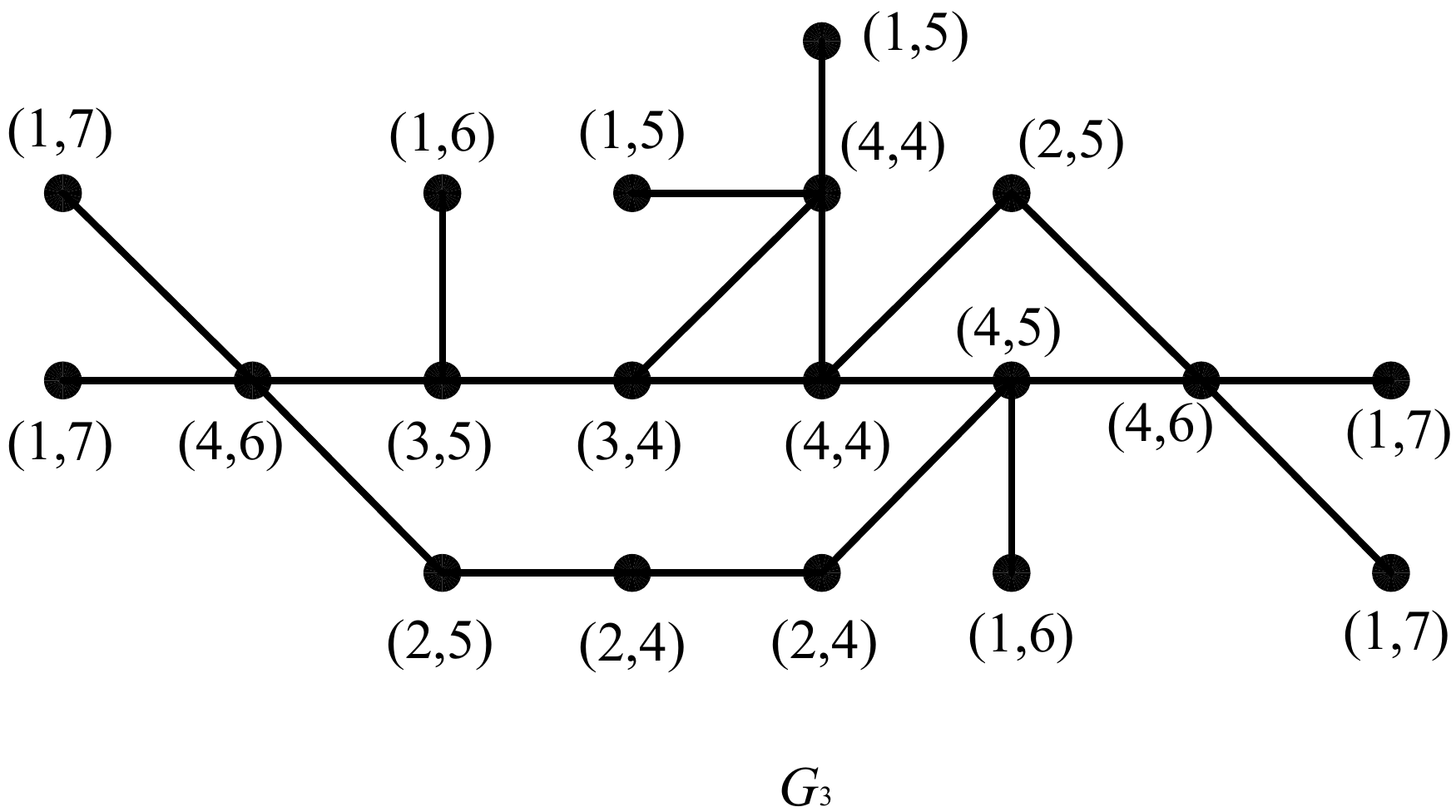}
\caption{Graphs $G_{0} \subset G_{1} \subset G_{2} \subset G_{3} = G$.}\label{Figure2}
\end{figure}

\section*{Concluding remarks}
The determination of extremal graphs for topological indices is remain interest of many researchers due to its various application in chemical, pharmaceutical and other properties of molecules. Various approaches are followed by researchers to determine the extremal graphs for the topological indices and its variants. In this work, we considered the adjacency relation of vertices to determine a lower bound for the eccentric connectivity index of graphs. This approach can also be useful to determine extremal graphs for other topological indices and its variants.

\section*{Acknowledgements}
I want to express my deep gratitude to anonymous referees for kind comments and constructive suggestions.


\begin{thebibliography}{15}

\bibitem{Gupta}
S. Gupta, M. Singh: Application of graph theory: Relationship of eccentric connectivity index and Wiener's index with anti-inflammatory, J. Math. Anal. Appl., 266, 259--268, (2002).

\bibitem{Ilic}
A. Ili$\acute{c}$, I. Gutman: Eccentric connectivity index of chemical trees, MATCH Commun. Math. Comput. Chem., 65, 731--744, (2011).

\bibitem{Morgan1}
M. J. Morgan, S. Mukwembi, H. C. Swart, On the eccentric connectivity index of a graph, Discrete Math. 311, 1229--1234, (2011).

\bibitem{Morgan2}
M. J. Morgan, S. Mukwembi, H. C. Swart, A lower bound on the eccentric connectivity index of a graph, Discrete Applied Math., 160, 248--258, (2012).

\bibitem{Sardana1}
S. Sardana, A. K. Madan: Application of graph theory: Relationship of antimycobacterial activity of quinolone derivatives with eccentric connectivity index and Zagreb group parameters, MATCH Commun. Math. Comput. Chem., 45, 35--53, (2002).

\bibitem{Sardana2}
S. Sardana, A. K. Madan: Application of graph theory: Relationship of molecular connectivity index, Wiener's index and Eccentric connectivity index with diuretic activity, MATCH Commun. Math. Comput. Chem., 43, 85--98, (2000).

\bibitem{Sharma}
V. Sharma, R. Goswami, A. K. Madan: Eccentric connectivity index: A novel highly discriminating topological descriptor for structure-activity studies, J. Chem. Inf. Comput. Sci., 37, 273--282, (1997).

\bibitem{West}
D. B. West: Introduction to Graph Theory, Prentice-Hall of India, 2001.

\bibitem{Wiener}
H. Wiener: Structural determination of parafin boiling points, J. Amer. Chem. Soc., 69, 17--20, (1947).

\bibitem{Zhou}
B. Zhou, Z. Du: On eccentric connectivity index, MATCH Commun. Math. Comput. Chem., 63, 181--198, (2010).

\end{thebibliography}
\end{document}